\journal{}
\renewcommand\appendix{\par
  \setcounter{section}{0}
  \setcounter{subsection}{0}
  \setcounter{figure}{0}
  \setcounter{table}{0}
  \renewcommand\thesection{Appendix \Alph{section}}
  \renewcommand\thefigure{\Alph{section}\arabic{figure}}
  \renewcommand\thetable{\Alph{section}\arabic{table}}
}
\theoremstyle{plain}\newtheorem{theorem}{Theorem}[section]
\theoremstyle{plain}
\theoremstyle{plain}
\theoremstyle{plain}\newtheorem{lemma}[theorem]{Lemma}
\theoremstyle{remark}
\theoremstyle{definition}
\pgfplotsset{compat=1.14}
\begin{document}
\begin{frontmatter}

\title{Comments on the "Optimal strategy of deteriorating items with capacity constraints under two-levels of trade credit policy"}

\author[1st_address]{Sunil Tiwari\href{https://orcid.org/0000-0002-0499-2794}{\includegraphics[scale=.6]{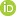}}} \corref{mycorrespondingauthor}
\cortext[mycorrespondingauthor]{Corresponding author}
\ead{sunil.tiwari047@gmail.com}

\author[2nd_address]{Masih Fadaki} 
\ead{masih.fadaki@rmit.edu.au}
\author[3rd_address]{Anuj Kumar Sharma} 
\ead{anujsharma1920@gmail.com}

\address[1st_address]{\textcolor{white}{"}Department of Industrial Systems Engineering and Management, National University of Singapore, 1 Engineering Drive 2, 117576, Singapore\textcolor{white}{"}}
\address[2nd_address]{\textcolor{white}{"}School of Business IT \& Logistics, RMIT University, Melbourne, VIC 3000, Australia\textcolor{white}{"}}
\address[3rd_address]{Department of Mathematics, Shyam Lal College, University of Delhi, India}

\begin{abstract}
This technical note rectified the mathematical and conceptual errors present in \citet{liao2014}. \citet{liao2014} proposed an EOQ model under two-levels trade credit policy considering limited storage capacity whereby the supplier provides a permissible delay period ($M$) to the retailer, and the retailer also offers a permissible delay period ($N$) (where $M>N$) to its customers. In the current technical note, we point out some defects of their model from the logical viewpoints of mathematics regarding both interest charged and interest earned. Furthermore, as an example, one of the affected numerical results is re-evaluated.\\
\end{abstract}

\begin{keyword}
Trade credit financing \sep Permissible delay in payments \sep Deterioration \sep Limited storage capacity \sep Inventory.
\end{keyword}

\end{frontmatter}


\section{Introduction}\label{sec:Intr}
To reflect the real business environment and addressing the inter-dependencies among the operations and financing processes, there is recently a trend of optimising inventory while the trade credit is also taken into consideration. \citet{liao2014} presented an inventory model for the perishable items with two-levels of trade credit and limited storage capacity. In this two-echelon supply chain inventory model, a supplier provides the products to a retailer and the retailer sells them to its customers. Under two levels of trade credit, supplier offers a fixed credit period to the retailer to settle the account and the retailer provides some trade credit to his customers. Beyond the allowable credit period, the retailer is charged as per agreed interest rate while she receives the interest regarding the customer's \textcolor{white}{"}outstanding amount if it is not paid within the allowable delay period.\textcolor{white}{"}

While the study contributed an interesting idea of considering the capacity constrained in the context of inventory optimisation under trade credit policy, we have identified a major issue in developing the mathematical expression regarding the interest earned of the second case ($N < T \le M $). Furthermore, a number of minor issues including an error in the Notations section and an incorrect mathematical expression for the annual deteriorating cost is spotted. Since the outlined major issue has a significant impact on the numerical results and the corresponding recommendations, we also amend the impacted computational results.

\section{Notations}

We have reviewed the notations of the original paper and compared the provided notations with the terms used in the paper. A minor issue exists in the notations with respect to the formula of $W^*$. The unit selling price of the item is denoted by '$s$' in the notation list while the $W^*$ is formulated as $pDM+\frac{pI_eDM^2}{2}$ in which '$p$' is used for the unit selling price. This expression should be updated to $sDM+\frac{sI_eDM^2}{2}$.

\section{Model}

Suppose there is a delay in payment time, denoted as \textit{M}. The inventory level $I(t)$ at a time $t\in \left[0,{\rm \; }T \right]$ can be formulated as:

\begin{flalign}
\textcolor{white}{"}
\frac{dI\left(t\right)}{dt} +\theta I\left(t\right)=-D;{\rm \; \; \; \; }0\le t\le T
\textcolor{white}{"}
\end{flalign}

\vspace{5mm}

\textcolor{white}{"}
Given the boundary condition as $I(T)=0$, we get
\textcolor{white}{"}

\begin{flalign}
I\left(t\right)=\frac{D}{\theta } \left(e^{\theta \left(T -t\right)} -1\right);{\rm \; \; \; \; }0\le t\le T  
\end{flalign}

\vspace{5mm}

At time $t = 0$, \textit{I}(0) = \textit{Q} and thus:
\begin{align} \label{eq:Qj}
Q &= \frac{D}{\theta } \left(e^{\theta T } -1\right) \\
\Rightarrow \; \frac{dQ }{dT } &= De^{\theta T }  \notag
\end{align}

\vspace{5mm}

Considering Equation \ref{eq:Qj}, the time at which \textit{W} inventory is exhausted (\textit{T${}_{a}$}) can be computed as:

\begin{align} \label{eq:Ta}
W &=\frac{D}{\theta } \left(e^{\theta T_{a} } -1\right) \\
\Rightarrow T_{a} & =\frac{1}{\theta } \ln \left(\frac{\theta W}{D} +1\right) \nonumber
\end{align}

\vspace{5mm}

\subsection{Estimation of total annual cost components}

There is another minor issue regarding the mathematical expression of annual deteriorating cost as it has been developed as $\frac{Dh(e^{\theta T}-\theta T -1)}{\theta T}$ in \citeauthor{liao2014}'s \citeyearpar{liao2014} model; however, the correct term is:

\begin{align*}
D_{T } &=\frac{c\left(Q -DT \right)}{T } \\
&=\frac{cD}{\theta T } \left\{e^{\theta T } -\theta T -1\right\}
\end{align*}

\vspace{5mm}

The interest earned per year, \textbf{Case 2: $N < T \le M $}

The incomplete term in \citeauthor{liao2014}'s \citeyearpar{liao2014} model pertains to computing the annual interest earned for the second case where "$N < T \le M$". This is the case where \citet{liao2014} did not consider the interest earned during \textit{N} to \textit{T}. As depicted in Figure \ref{fig:case2}, annual interest earned should be calculated for both periods of $N$ to $T$ and $T$ to $M$. They formulated the interest earned for this period as $\frac{sI_{e}D (2MT-N^2-T^2)}{2T }$; however, the correct interest earned for this case should be formulated as:

\textcolor{white}{"}
\begin{align*}
&=\frac{sI_{e} \left(DN+DT \right)\left(T -N\right)}{2T } +\frac{1}{T } \left\{sDT+\frac{sDI_{e} \left(T^{2} -N^{2} \right)}{2} \right\}\left(M -T \right)I_{e} \\
&=\frac{sDI_{e} \left(T^{2} -N^{2} \right)+\left(2sDT+sD\left(T^{2} -N^{2} \right)I_{e} \right)I_{e} \left(M -T \right)}{2T } 
\end{align*}
\textcolor{white}{"}

\begin{figure}[H]
\centering
\begin{tikzpicture}[scale=1.3]
    \draw [thick, ->] (0,0) -- (7,0) node[anchor=north west] {time};
    \draw [thick, ->] (0,0) -- (0,5);
    
    \draw [thick, -] (0,0) -- (4,2.5);
    \draw [thick, -] (4,2.5) -- (6,2.5);
    \draw [thick, -] (4,3.5) -- (6,3.5);
    
    \draw [thick, -] (2.5,1.5625) -- (2.5,0);
    \draw [thick, -] (4,2.5) -- (4,0);
    \draw [thick, -] (6,2.5) -- (6,0);
    \draw [thick, -] (4,2.5) -- (4,3.5);
    \draw [thick, -] (6,2.5) -- (6,3.5);
    
    \draw [dashed, -] (0,1.5625) -- (2.5,1.5625);
    \draw [dashed, -] (0,2.5) -- (4,2.5);
    
    \coordinate [label=$0$] (L1) at (0,-5mm);
    \coordinate [label=$N$] (L2) at (2.5,-5mm);
    \coordinate [label=$T$] (L3) at (4,-5mm);
    \coordinate [label=$M$] (L4) at (6,-5mm);
    
    \coordinate [label=$DN$] (L5) at (-0.3,1.4);
    \coordinate [label=$DT$] (L5) at (-0.3,2.4);
    
     
     \fill[pattern=horizontal lines, pattern color=blue] (4,0) rectangle (6,2.5);
     
     \fill[pattern=horizontal lines, pattern color=blue] (2.5,0) -- (2.5,1.5625) -- (4,2.5) -- (4,0) -- cycle;
     
     \fill[pattern=vertical lines, pattern color=green] (4,2.5) rectangle (6,3.5);
     
    \draw [color= gray] (5,4) rectangle (7,5);
    
    \draw[pattern= horizontal lines, pattern color=blue] (5.1,4.1) rectangle (5.5,4.4) node at (6.2,4.25) {\tiny{Interest earned}};
    
    \draw[pattern= vertical lines, pattern color=green] (5.1,4.6) rectangle (5.5,4.9) node at (6.2,4.75) {\tiny{Interest charged}};
    
\end{tikzpicture}
\caption{Demonstration of annual interest earned for case 2.} \label{fig:case2}
\end{figure}
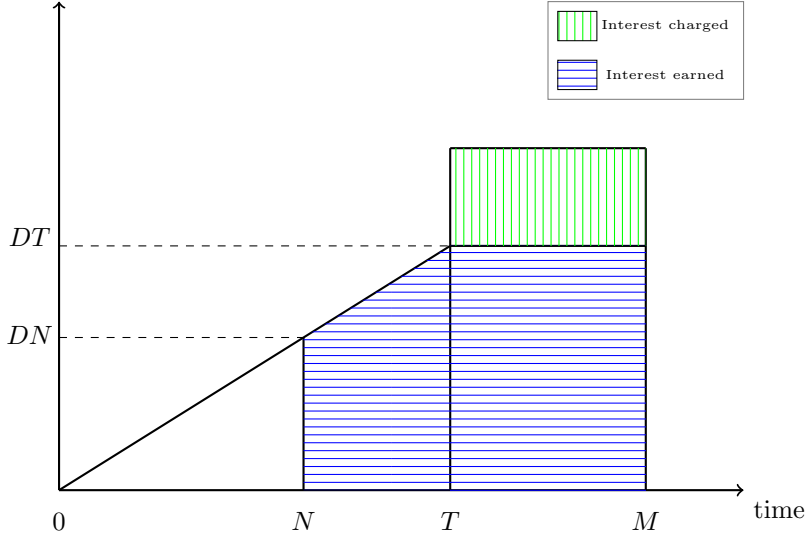

\subsection{Implications of corrected term on the total annual cost}

Putting together all components of retailer's total annual cost, is can be expressed as:

\begin{quote}
\begin{tabular}{ll}
     \textit{TC}(\textit{T}) = &  "ordering cost + deterioration cost + stock-holding cost in RW \\
     & + stock-holding cost in OW + interest paid -- interest earned"
\end{tabular}
\end{quote}

Given the possible values of \textit{N}, \textit{M}, and \textit{T}, the retailer's total annual cost is estimated for the following configurations:

\vspace{5mm}

\textbf{Configuration 1: \textcolor{white}{"}$T_{a} <N < M  < W^*$\textcolor{white}{"}}

Regarding this configuration, the total annual cost can be estimated as follows:

\[TC\left(T \right)=\left\{\begin{array}{l} {TC_{1} \left(T \right);{\rm \; \; \; \; }if{\rm \; }0<T \le T_{a} } \\ {TC_{2} \left(T \right);{\rm \; \; \; \; }if{\rm \; }T_{a} <T \le N} \\ {TC_{3} \left(T \right);{\rm \; \; \; \; }if{\rm \; }N<T \le M } \\ {TC_{4} \left(T \right);{\rm \; \; \; \; }if{\rm \; }M<T \le W^*  } \\ {TC_{5} \left(T \right);{\rm \; \; \; \; }if{\rm \; }W^* <T } \end{array}\right. \] 

\vspace{5pt}

In this configuration, the total cost regarding the third time window ($N < T \le M$) should be amended to:

\begin{flalign} 
\textcolor{white}{"}
TC_{3} \left(T \right) &= \frac{A}{T } +\frac{D\left(k+c\theta \right)}{\theta ^{2} T } \left(e^{\theta T } -\theta T -1\right)-\frac{\left(k-h\right)}{\theta ^{2} T } \left\{D\left(e^{\theta T_{a} } -\theta T_{a} -1\right)+\theta ^{2} W\left(T -T_{a} \right)\right\}  \\ 
&\hphantom{{}=} -\frac{1}{2T } \left\{sDI_{e} \left(T^{2} -N^{2} \right)+\left(2sDT +sD\left(T^{2} -N^{2} \right)I_{e} \right)I_{e} \left(M -T \right)\right\} & \nonumber
\textcolor{white}{"}
\end{flalign}

\vspace{7pt}

\textbf{Configuration 2: \textcolor{white}{"}$N<T_{a} <M < W^*$\textcolor{white}{"}}

With respect to this configuration, the total annual cost can be estimated as:

\textcolor{white}{"}
\[TC\left(T \right)=\left\{\begin{array}{l} {TC_{1} \left(T \right);{\rm \; \; \; \; }if{\rm \; }0<T \le N} \\ {TC_{6} \left(T \right);{\rm \; \; \; \; }if{\rm \; }N<T \le T_{a} } \\ {TC_{3} \left(T \right);{\rm \; \; \; \; }if{\rm \; }T_{a} <T \le M } \\ {TC_{4} \left(T \right);{\rm \; \; \; \; }if{\rm \; }M <T {\rm \; } \le W^* } \\ {TC_{5} \left(T \right);{\rm \; \; \; \; }if{\rm \; }W^* <T {\rm \; }} \end{array}\right. \] 
\textcolor{white}{"}
where

\begin{flalign}
\textcolor{white}{"}
TC_{3} \left(T \right) &= \frac{A}{T } +\frac{D\left(k+c\theta \right)}{\theta ^{2} T } \left(e^{\theta T } -\theta T -1\right)-\frac{\left(k-h\right)}{\theta ^{2} T } \left\{D\left(e^{\theta T_{a} } -\theta T_{a} -1\right)+\theta ^{2} W\left(T -T_{a} \right)\right\}  \\ 
&\hphantom{{}=} -\frac{1}{2T } \left\{sDI_{e} \left(T^{2} -N^{2} \right)+\left(2sDT +sD\left(T^{2} -N^{2} \right)I_{e} \right)I_{e} \left(M -T \right)\right\} & \nonumber
\textcolor{white}{"}
\end{flalign}

\vspace{7pt}

\subsection{Probing the convexity for total annual cost function}

In this section, we investigate the convexity of the total annual cost functions ($TC_3(T)$).

\begin{theorem} \label{thrm:T1}
$TC_{3} \left(T \right)$ is convex on $T >0$.
\end{theorem}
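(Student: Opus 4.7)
My plan is to decompose $TC_{3}(T)$ into a sum of terms, each of which is convex on $(0,\infty)$, so that convexity of the whole function is automatic; a direct computation of $TC_{3}''(T)$ would be equivalent but algebraically heavier. The easy pieces are $A/T$, whose second derivative is $2A/T^{3}>0$, and the holding-plus-deterioration term $\frac{D(k+c\theta)}{\theta^{2}T}(e^{\theta T}-\theta T-1)$: expanding the numerator as a power series gives $\frac{D(k+c\theta)}{\theta^{2}}\sum_{n\ge 2}\theta^{n}T^{n-1}/n!$, and every monomial $T^{n-1}$ with $n\ge 2$ has non-negative second derivative, so with its positive coefficient the series is convex.

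Next I would reduce the stock-holding correction $-\frac{(k-h)}{\theta^{2}T}\{D(e^{\theta T_{a}}-\theta T_{a}-1)+\theta^{2}W(T-T_{a})\}$ using the definition of $T_{a}$ in \eqref{eq:Ta}, which rearranges to $e^{\theta T_{a}}=1+\theta W/D$. Substituting collapses the bracketed expression to $\theta W(1+\theta T)-\theta T_{a}(D+\theta W)$ and hence reduces the whole term to
\[
-(k-h)W+\frac{(k-h)\bigl[T_{a}(D+\theta W)-W\bigr]}{\theta\, T}.
\]
Under the standard two-warehouse assumption $k>h$, combined with the elementary inequality $(1+x)\ln(1+x)>x$ for $x>0$ applied at $x=\theta W/D$ to give $T_{a}(D+\theta W)>W$, the coefficient of $1/T$ is positive, so this is a constant plus a convex $c/T$ piece. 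For the interest-earned term I would multiply out $sDI_{e}(T^{2}-N^{2})+(2sDT+sD(T^{2}-N^{2})I_{e})I_{e}(M-T)$, divide by $2T$, and collect by powers of $T$; after carrying the outer minus sign in $TC_{3}$, the non-constant remains are positive multiples of $T^{2}$, $T$, and $1/T$, each of which is convex.

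The hard part will be the stock-holding correction: at first glance it contains what looks like a concave $-c/T$ piece, and one genuinely needs both the closed-form identity for $T_{a}$ and the inequality $(1+x)\ln(1+x)>x$ to flip the sign of that coefficient and turn it into a convex $+c/T$ contribution. Everything else is essentially bookkeeping on powers of $T$.
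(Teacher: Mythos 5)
Your proposal is correct, and it reaches the conclusion by a genuinely different route from the paper. The paper works at the level of second derivatives: it computes $TC_{3}'(T)$ and $TC_{3}''(T)$ explicitly, observes that $TC_{3}''(T)=TC_{2}''(T)+\frac{DsI_{e}}{2T^{3}}f(T)$ with $f(T)=T^{2}+N^{2}+I_{e}\left(N^{2}-T^{2}+2T^{3}\right)$, proves $f(T)>0$ by a small monotonicity lemma ($f'(T)>0$ since $0<I_{e}<1$ and $f(0)=N^{2}(1+I_{e})>0$), and then imports the convexity of $TC_{2}$ from \citet{liao2014} to conclude $TC_{3}''>0$. You instead decompose $TC_{3}$ itself into elementary convex building blocks ($c/T$ with $c>0$, positive-coefficient power series in $T$, affine terms, positive multiples of $T^{2}$), which makes the argument self-contained: you never need the prior convexity of $TC_{2}$. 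I checked the two nontrivial reductions and they are sound: using $e^{\theta T_{a}}=1+\theta W/D$ the stock-holding correction does collapse to $-(k-h)W+\frac{(k-h)\left[T_{a}(D+\theta W)-W\right]}{\theta T}$, and $(1+x)\ln(1+x)>x$ at $x=\theta W/D$ gives the positive $1/T$ coefficient; expanding the interest-earned term and dividing by $2T$ leaves, after the sign flip, $\frac{sDI_{e}^{2}}{2}T^{2}$, an affine part, and $\frac{sDI_{e}N^{2}}{2}(1+I_{e}M)\cdot\frac{1}{T}$, all convex. What each approach buys: yours exposes exactly where the structural assumptions enter --- in particular the two-warehouse condition $k>h$ (satisfied in the paper's data, $k=1.1>h=1$, but left implicit in the paper because it is buried inside the borrowed convexity of $TC_{2}$) --- while the paper's derivative computation avoids naming that assumption at the cost of depending on an external result; also note that for the linear-in-$T$ piece of the interest-earned term positivity of the coefficient is unnecessary, since any affine term is convex.
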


\begin{proof}
Considering the first and the second derivatives of the revised $TC_3(T)$:

\begin{flalign*}
\textcolor{white}{"}
TC_{3} \left(T \right) &=\frac{A}{T } +\frac{D\left(k+c\theta \right)}{\theta ^{2} T } \left(e^{\theta T } -\theta T -1\right)-\frac{\left(k-h\right)}{\theta ^{2} T } \left\{D\left(e^{\theta T_{a} } -\theta T_{a} -1\right)+\theta ^{2} W\left(T -T_{a} \right)\right\} & \\
&\hphantom{{}=}-\frac{1}{2T } \left\{DsI_{e} \left(T^{2} -N^{2} \right)+\left(2DT s+Ds\left(T^{2} -N^{2} \right)I_{e} \right)I_{e} \left(M -T \right)\right\} &
\end{flalign*}
\textcolor{white}{"}

\textcolor{white}{"}
\begin{flalign*}
TC_{3}^{'} \left(T \right) &=\frac{1}{\left(\theta T \right)^{2} } \left[-\theta ^{2} A+D\left(k+c\theta \right)\left(\theta T e^{\theta T } -e^{\theta T } +1\right)-D\left(k-h\right)\left(\theta T_{a} e^{\theta T_{a} } -e^{\theta T_{a} } +1\right)\right] & \\ 
&\hphantom{{}=}-\frac{DsI_{e} }{2T^{2} } \left\{\left(N^{2} -T^{2} \right)+\left(1+T I_{e} \right)+\left(N^{2} +T^{2} \right)I_{e} \right\} &
\end{flalign*}
\textcolor{white}{"}

\textcolor{white}{"}
\begin{flalign*}
TC_{3}^{''} \left(T \right) &=\frac{2A}{T^{3} } +\frac{D\left(k+c\theta \right)}{\theta ^{2} T^{3} } \left\{\theta ^{2} T^{2} e^{\theta T } -2\theta T e^{\theta T } +2e^{\theta T } -2\right\}+\frac{2D\left(k-h\right)}{\theta ^{2} T^{3} } \left(\theta T_{a} e^{\theta T_{a} } -e^{\theta T_{a} } +1\right) & \\ 
&\hphantom{{}=}+\frac{DsI_{e} }{2T^{3} } \left\{T^{2} +N^{2} +I_{e} \left(N^{2} -T^{2} +2T^{3} \right)\right\} &
\end{flalign*}
\textcolor{white}{"}

\textcolor{white}{"}
\begin{flalign*}
TC_{3}^{'} \left(T \right) &= TC_{2}^{'} \left(T \right)-\frac{DsI_{e} }{2T^{2} } \left\{\left(N^{2} -T^{2} \right)+\left(1+T I_{e} \right)+\left(N^{2} +T^{2} \right)I_{e} \right\} &
\end{flalign*}
\textcolor{white}{"}

\textcolor{white}{"}
\begin{flalign} \label{eq:thrm3.5}
TC_{3}^{''} \left(T \right) &=TC_{2}^{''} \left(T \right)+\frac{DsI_{e} }{2T^{3} } \left\{T^{2} +N^{2} +I_{e} \left(N^{2} -T^{2} +2T^{3} \right)\right\} & 
\end{flalign}
\textcolor{white}{"}

\begin{lemma} \label{lemma:forT3}
$T^{2} +N^{2} +I_{e} \left(N^{2} -T^{2} +2T^{3} \right)$$\mathrm{>}$0
\end{lemma}

\begin{proof}
Let 
\begin{equation} \label{eq:lemma3.6}
f(T )=T^{2} +N^{2} +I_{e} \left(N^{2} -T^{2} +2T^{3} \right)
\end{equation}

\[f^{'} (T )=2T -2I_{e} T +6T^{2} \] 
\[f^{'} (T )=2T (1-I_{e} )+6T^{2} \] 
\[f^{'} (T )>0 \, \, \therefore \, \, 0<I_{e} <1\]

This implies that $f(T )$ is increasing function of $T $ and $f(0)=N^{2} (1+I_{e} )>0$.

Hence, $f(T )>0$, when $T >0$

This completes the proof of Lemma \ref{lemma:forT3}.
\end{proof}

\textit{Continuing the proof of Theorem \ref{thrm:T1}.}

From Equations \ref{eq:thrm3.5} and \ref{eq:lemma3.6}:
\begin{flalign*}
TC_{3}^{''} \left(T \right) &=TC_{2}^{''} \left(T \right)+\frac{DsI_{e} }{2T^{3} } f(T ) & 
\end{flalign*}

From Lemma \ref{lemma:forT3} and as $TC_{2} \left(T \right)$ is convex (See \citet{liao2014}'s paper), therefore $TC_{3}^{''} \left(T \right)>0$

Hence,  $TC_{3} \left(T \right)$ is convex on $T >0$.

This completes the proof.

\end{proof}


\section{Updated Decision rule}
From Theorem 3.1, we get,

\textcolor{white}{"}
\begin{flalign*}
TC_{3}^{'} \left(T \right) &=\frac{1}{\left(\theta T \right)^{2} } \left[-\theta ^{2} A+D\left(k+c\theta \right)\left(\theta T e^{\theta T } -e^{\theta T } +1\right)-D\left(k-h\right)\left(\theta T_{a} e^{\theta T_{a} } -e^{\theta T_{a} } +1\right)\right] & \\ 
&\hphantom{{}=}-\frac{DsI_{e} }{2T^{2} } \left\{\left(N^{2} -T^{2} \right)+\left(1+T I_{e} \right)+\left(N^{2} +T^{2} \right)I_{e} \right\} &
\end{flalign*}
\textcolor{white}{"}

After simplifying above equation, we get

\textcolor{white}{"}
\begin{flalign*}
TC_{3}^{'} (T ) &=\frac{1}{(\theta T )^{2} } [-\theta ^{2} A+D(k+c\theta )(\theta T e^{\theta T } -e^{\theta T } +1)-D(k-h)(\theta T_{a} e^{\theta T_{a} } -e^{\theta T_{a} } +1) & \\ 
&\hphantom{{}=}-\frac{DsI_{e}\theta^{2} }{2} \{(N^{2} -T^{2} )+(1+T I_{e} )+(N^{2} +T^{2} )I_{e} \} ] &
\end{flalign*}
\textcolor{white}{"}

Now, replacing $T=M$ in above equation, we get

\textcolor{white}{"}
\begin{flalign*}
TC_{3}^{'} (M ) &=\frac{1}{(\theta M )^{2} } [-\theta ^{2} A+D(k+c\theta )(\theta M e^{\theta M } -e^{\theta M } +1)-D(k-h)(\theta T_{a} e^{\theta T_{a} } -e^{\theta T_{a} } +1) & \\ 
&\hphantom{{}=}-\frac{DsI_{e}\theta^{2} }{2} \{(N^{2} -M^{2} )+(1+M I_{e} )+(N^{2} +M^{2} )I_{e} \}] &
\end{flalign*}
\textcolor{white}{"}

\begin{flalign*}
TC_{3}^{'} \left(M \right) &=\frac{\Delta_3}{\left(\theta M \right)^{2} } &
\end{flalign*}

where

\textcolor{white}{"}
\begin{flalign*}
\Delta_3 &=[-\theta ^{2} A+D(k+c\theta )(\theta M e^{\theta M } -e^{\theta M } +1)-D(k-h)(\theta T_{a} e^{\theta T_{a} } -e^{\theta T_{a} } +1) & \\ 
&\hphantom{{}=}-\frac{DsI_{e}\theta^{2} }{2} \{(N^{2} -M^{2} )+(1+M I_{e} )+(N^{2} +M^{2} )I_{e} \}] &
\end{flalign*}
\textcolor{white}{"}

\section{Implications on the Final Result}

In Tables 1 and 2 of Liao's study, where the optimal value of $T$ obtains from $\mathrm{TC_3}$, the value of optimal $T$, the corresponding decision rule, and value of total cost need to be updated. To illustrate this matter, we have re-calculated the values of optimal $T$ and the corresponding total cost for row 3 of Table 1 in Liao's study (Table \ref{tab:implications}).

\clearpage

\textbf{Parameters of experiment regarding row 3 of Table 1}

\vspace{5mm}

\begin{tabular}{lllp{0.8in}llll} 
$\theta$&=&0.00001&&	$h$&=&1& \\
$D$&=&9000000&&	$\mathrm{I_e}$&=&0.000005& \\
$A$&=&1224.04585&&	$\mathrm{I_p}$&=&0.15& \\
$c$&=&1.9999&&	$N$&=&0.0161& \\
$s$&=&2&&	$M$&=&0.0165& \\
$k$&=&1.1&&	$W$&=&144900& \\
\end{tabular}

\vspace{5mm}

\begin{table}[h]
\centering
\caption{Implications of updated $\mathrm{TC_3}$ on the final result.}
\label{tab:implications}
\scalebox{0.8}{
\begin{tabular}{@{}lcccccccccccl@{}}
\toprule
        & Table & Row No & Dec. Rule & $T_a$  & W*     & $\Delta_1$                    & $\Delta_2$                    & $\Delta_3$                       & $\Delta_4$                    & $\Delta_5$                       & T*                        & TVC(T*)    \\ \midrule
Liao    & 1     & 3      & (B1)      & 0.0161 & 0.0165 & \textless{}0 & \textless{}0 & \textgreater{}0 & \textless{}0 & \textless{}0    & $\mathrm{T^*_3}$=0.0161   & 148020     \\
Updated &       &        & (B2)      &        &        &                               &                               & $\sim$0                          &                               & \textgreater{}0 & $\mathrm{T^*_3}$=1.000129 & 4936419.16 \\ \bottomrule
\end{tabular}
}
\end{table}

\section{Conclusion}
In this technical note, we reviewed the \citeauthor{liao2014}'s \citeyearpar{liao2014} model in which an inventory model was developed by considering that the storage capacity is limited and there are two levels of trade credit. The unavoidable complexity emanating from the difference between permissible delay regarding the periods of interest earned and interest charged requires the decision maker to seek for an optimal inventory model. We identified some minor and one major issue regarding the formulation of $\mathrm{TC_3}$ as the original study did not consider the interest earned during \textit{N} to \textit{T}. The errors have been corrected and the impact of updated terms on the computational result has been presented.

\clearpage

\section*{\refname}
\footnotesize{
\bibliography{Optimal_Inventory}
}

\end{document}